\def\NZQ{\mathbb}               
\def\ZZ{{\NZQ Z}}
\def\RR{{\NZQ R}}
\def\frk{\mathfrak}               
\def\Phi{{\frk N}}
\def\eb{{\bold e}}
\def\xb{{\bold x}}
\def\opn#1#2{\def#1{\operatorname{#2}}} 
\opn\chara{char} 
\opn\length{\ell} 
\opn\pd{pd} 
\opn\rk{rk}
\opn\projdim{proj\,dim} 
\opn\injdim{inj\,dim} 
\opn\rank{rank}
\opn\depth{depth} 
\opn\grade{grade} 
\opn\height{height}
\opn\embdim{emb\,dim} 
\opn\codim{codim}
\opn\Tr{Tr} 
\opn\bigrank{big\,rank}
\opn\superheight{superheight}
\opn\lcm{lcm}
\opn\trdeg{tr\,deg}
\opn\reg{reg} 
\opn\lreg{lreg} 
\opn\ini{in} 
\opn\lpd{lpd}
\opn\size{size}
\opn\mult{mult}
\opn\dist{dist}
\opn\cone{cone}
\opn\lex{lex}
\opn\rev{rev}
\opn\div{div} \opn\Div{Div} \opn\cl{cl} \opn\Cl{Cl}
\opn\Spec{Spec} \opn\Supp{Supp} \opn\supp{supp} \opn\Sing{Sing}
\opn\Ass{Ass} \opn\Min{Min}
\opn\Ann{Ann} \opn\Rad{Rad} \opn\Soc{Soc}
\opn\Syz{Syz} \opn\Im{Im} \opn\Ker{Ker} \opn\Coker{Coker}
\opn\Am{Am} \opn\Hom{Hom} \opn\Tor{Tor} \opn\Ext{Ext}
\opn\End{End} \opn\Aut{Aut} \opn\id{id} \opn\ini{in}
\opn\nat{nat}
\opn\pff{pf}
\opn\Pf{Pf} \opn\GL{GL} \opn\SL{SL} \opn\mod{mod} \opn\ord{ord}
\opn\Gin{Gin}
\opn\Hilb{Hilb}\opn\adeg{adeg}\opn\std{std}\opn\ip{infpt}
\opn\Pol{Pol}
\opn\sat{sat}
\opn\Var{Var}
\opn\Gen{Gen}
\opn\aff{aff} \opn\con{conv} \opn\relint{relint} \opn\st{st}
\opn\lk{lk} \opn\cn{cn} \opn\core{core} \opn\vol{vol}
\opn\link{link} \opn\star{star}
\opn\gr{gr}
\def\Pc{{\mathcal P}}
\def\Qc{{\mathcal Q}}
\def\vol{{\textnormal{vol}}}
\def\conv{{\textnormal{conv}}}
\def\ord{{\textnormal{ord}}}
\def\pot#1#2{#1[\kern-0.28ex[#2]\kern-0.28ex]}
\opn\dirlim{\underrightarrow{\lim}}
\opn\inivlim{\underleftarrow{\lim}}
\let\to=\rightarrow
\def\Implies{\ifmmode\Longrightarrow \else
	\unskip${}\Longrightarrow{}$\ignorespaces\fi}
\def\implies{\ifmmode\Rightarrow \else
	\unskip${}\Rightarrow{}$\ignorespaces\fi}
\def\iff{\ifmmode\Longleftrightarrow \else
	\unskip${}\Longleftrightarrow{}$\ignorespaces\fi}
\newtheorem{Theorem}{Theorem}[section]
\newtheorem{Lemma}[Theorem]{Lemma}
\newtheorem{Corollary}[Theorem]{Corollary}
\newtheorem{Example}[Theorem]{Example}
\numberwithin{equation}{section}
\newtheorem*{acknowledgement}{Acknowledgment}
\let\epsilon\varepsilon
\let\phi=\varphi
\let\kappa=\varkappa
\def\qed{\ifhmode\textqed\fi
	\ifmmode\ifinner\quad\qedsymbol\else\dispqed\fi\fi}
\def\textqed{\unskip\nobreak\penalty50
	\hskip2em\hbox{}\nobreak\hfil\qedsymbol
	\parfillskip=0pt \finalhyphendemerits=0}
\def\dispqed{\rlap{\qquad\qedsymbol}}
\opn\dis{dis}
\opn\height{height}
\opn\dist{dist}
\def\pnt{{\raise0.5mm\hbox{\large\bf.}}}
\opn\Lex{Lex}
\opn\conv{conv}
\begin{document}
\title{Ehrhart polynomials of lattice polytopes with normalized volumes $5$}
\author[A.~Tsuchiya]{Akiyoshi Tsuchiya}
\address[Akiyoshi Tsuchiya]{Department of Pure and Applied Mathematics,
	Graduate School of Information Science and Technology,
	Osaka University,
	Suita, Osaka 565-0871, Japan}
\email{a-tsuchiya@ist.osaka-u.ac.jp}
\subjclass[2010]{52B12, 52B20}
\keywords{lattice polytope, $\delta$-polynomial, $\delta$-vector, Ehrhart polynomial, spanning polytope}
\begin{abstract}
	A complete classification of the $\delta$-vectors of
	lattice polytopes whose normalized volumes are at most $4$ is known.
	In the present paper, we will classify all the $\delta$-vectors of lattice polytopes with normalized volumes $5$.
\end{abstract} 

\maketitle 
\section*{Introduction}
One of the final, however, unreachable goal of the study on lattice polytopes is to classify lattice polytopes up to unimodular equivalence.
In lower dimension, lattice polytopes with a small volume    are classified (\cite{Bal})
and lattice polytopes with a small number of lattice points are classified (\cite{santos1, santos2}).
On the other hand, for arbitrary dimension, all lattice polytopes whose normalized volumes are at most $4$ are completely classified (\cite{HT}). 
In order to do this task, a complete classification of the $\delta$-vectors of lattice polytopes whose normalized volumes are at most $4$ is used.
This implies that finding a combinatorial characterization of the $\delta$-vectors of lattice polytopes is useful for classifying lattice polytopes.
In the present paper, as a next step, we will classify all the $\delta$-vectors of lattice polytopes whose normalized volumes are $5$.
\subsection{Background on $\delta$-vectors}
First, recall from \cite[Part II]{HibiRedBook} what   $\delta$-vectors are.
We say that a convex polytope is a \textit{lattice polytope}
if its vertices are all elements in $\ZZ^d$.
Let $\mathcal{P},\mathcal{Q} \subset \RR^d$  be  lattice polytopes of dimension $d$.
We say that $\mathcal{P}$ and $\mathcal{Q}$ are  {\em unimodularly equivalent} if there exists an unimodular 
transformation that maps on one polytope to the other, that is, an affine map $f : \RR^d \to \RR^d$ with $f(\ZZ^d)=\ZZ^d$ and $f(\Pc)=\Qc$.
In this case, we write $\Pc \cong \Qc$.
Given a positive integer $n$, we define
$$L_{\Pc}(n)=|n \Pc \cap \ZZ^d|,$$
where $n\Pc=\{n \xb : \xb \in \Pc\}$ and $|X|$ is the cardinality of a finite set $X$. 
The study on $L_{\Pc}(n)$ originated in Ehrhart \cite{Ehrhart} who proved that $L_{\Pc}(n)$ is a polynomial in $n$ of degree $d$ with the constant term $1$.
Furthermore, the leading coefficient, that is, the coefficient of $n^d$ of $L_{\Pc}(n)$ coincides with the usual volume of $\Pc$.
We say that $L_{\Pc}(n)$ is the \textit{Ehrhart polynomial} of $\Pc$.
Clearly, if $\Pc \cong \Qc$, then one has $L_{\Pc}(n)=L_{\Qc}(n)$.

We define $\delta(\Pc,t)$ by the formula
$$ \delta(\Pc,t)=(1-t)^{d+1}\left[1+\sum_{n=1}^{\infty}L_{\Pc}(n)t^n\right].$$
Then it follows that $\delta(\Pc,t)$ is a polynomial in $t$ of degree at most $d$.
Set $\delta(\Pc,t)=\delta_0+\delta_1t+\cdots+\delta_dt^d$.
We say that $\delta(\Pc,t)$ is the \textit{$\delta$-polynomial} and the sequence $(\delta_0,\ldots,\delta_d)$ is the \textit{$\delta$-vector} of $\Pc$.
The following properties of $\delta(\Pc,t)$ are known:
\begin{itemize}
	\item $\delta_0=1$, $\delta_1=|\Pc \cap \ZZ^d|-(d+1)$ and $\delta_d=|(\Pc \setminus \partial \Pc) \cap \ZZ^d|$, where $\partial \Pc$ is the boundary of $\Pc$. Hence one has $\delta_1 \geq \delta_d$;
	\item $\delta_i \geq 0$ for each $i$;
	\item When $\delta_d \neq 0$, one has $\delta_i \geq \delta_1$ for $1 \leq i \leq d-1$;
	\item $\delta(\Pc,1)=\sum_{i=0}^{d}\delta_i$ coincides with the \textit{normalized volume} of $\Pc$.
\end{itemize} 

There are two well-known inequalities on $\delta$-vectors.
Let $s$ be the degree of the $\delta$-polynomial, i.e., $s=\textnormal{max}\{i : \delta_i \neq 0\}$.
In \cite{Stanley_ineq}, Stanley proved that
\begin{equation}
\label{eq1}
\delta_0+\delta_1+\cdots+\delta_i \leq \delta_s+\delta_{s-1}+\cdots+\delta_{s-i}, \ \ 0 \leq i \leq \lfloor s/2 \rfloor,
\end{equation}
while in \cite{Hibi_ineq}, Hibi proved that
\begin{equation}
\label{eq2}
\delta_{d-1}+\delta_{d-2}+\cdots+\delta_{d-i} \leq \delta_2+\delta_{3}+\cdots+\delta_{i+1}, \ \ 1 \leq i \leq \lfloor (d-1)/2 \rfloor.
\end{equation}
Recently, there are more general results of inequalities on $\delta$-vectors by Stapledon in \cite{Stap1,Stap2}.

\subsection{Characterization of $\delta$-vectors with small volumes}
One of the most fundamental problems of enumerative combinatorics is to find a combinatorial characterization of all vectors that can be realized as the $\delta$-vector of some lattice polytope. 
For example, restrictions like $\delta_0=1$,  $\delta_i \geq 0$, and the inequalities (\ref{eq1}) and (\ref{eq2}) are necessary conditions for a vector to be the $\delta$-vector of some lattice polytope.
On the other hand, in \cite{HHN}, the possible $\delta$-vectors with $\delta_0+\cdots+\delta_d \leq 3$ are completely classified by the inequalities (\ref{eq1}) and (\ref{eq2}).

\begin{Theorem}[{\cite[Theorm 0.1]{HHN}}]
	\label{HHN}
	Let $d \geq 3$.
	Given a sequence $(\delta_0,\ldots,\delta_d)$ of nonnegative integers, 
	where $\delta_0=1$ and $\delta_1 \geq \delta_d$, 
	which satisfies $\sum_{i=0}^{d}\delta_{i} \leq 3$, 
	there exists a lattice polytope $\Pc \subset \RR^d$ of dimension $d$ whose $\delta$-vector coincides with $(\delta_0,\ldots,\delta_d)$
	if and only if $(\delta_0,\ldots,\delta_d)$ satisfies all inequalities {\rm(\ref{eq1})} and {\rm (\ref{eq2})}.
 \end{Theorem}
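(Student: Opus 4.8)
The plan is to treat the two implications separately. The \emph{only if} direction is essentially free: every sequence arising as a $\delta$-vector satisfies (\ref{eq1}) and (\ref{eq2}), since these are exactly Stanley's and Hibi's inequalities recalled above, so no argument is needed there. All the content lies in the \emph{if} direction, which demands an explicit construction.

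For the \emph{if} direction I would first exploit that $\sum_i \delta_i \le 3$ forces the normalized volume to be at most $3$, so it is natural to realize every admissible vector by a single lattice \emph{simplex}. Thus the first step is to enumerate, for a fixed $d \ge 3$, all sequences $(\delta_0,\ldots,\delta_d)$ with $\delta_0 = 1$, nonnegative integer entries, $\delta_1 \ge \delta_d$, total at most $3$, and satisfying (\ref{eq1}) and (\ref{eq2}). These fall into a short list of shapes according to the total: the trivial vector $(1,0,\ldots,0)$; the vectors with a single further entry $\delta_j = 1$; and the total-$3$ vectors, which have either one entry $\delta_j = 2$ or two entries $\delta_j = \delta_k = 1$. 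For each shape one reads off from (\ref{eq1}), (\ref{eq2}) (together with $\delta_1 \ge \delta_d$ and the interior-point constraint $\delta_i \ge \delta_1$ when $\delta_d \neq 0$) precisely which positions $j$, or pairs $j<k$, are permitted; for instance a lone $\delta_j = 1$ survives exactly when $j$ is not too close to $d$, the binding restriction being Hibi's inequality (\ref{eq2}).

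The second step is the construction, for which I would use the standard description of the $\delta$-polynomial of a lattice $d$-simplex $\Delta = \conv(v_0,\ldots,v_d)$: homogenize by setting $\hat v_i = (v_i,1) \in \RR^{d+1}$, form the cone they generate, and let $\Pi = \{\sum_i \lambda_i \hat v_i : 0 \le \lambda_i < 1\}$ be its half-open fundamental parallelepiped; then $\delta_j$ is the number of lattice points of $\Pi$ whose last coordinate equals $j$. Realizing a target vector then amounts to choosing the apex coordinates of a simplex so that the finitely many lattice points of $\Pi$ — there are $\sum_i \delta_i \le 3$ of them, indexed by the quotient group $\ZZ^{d+1}/\langle \hat v_0,\ldots,\hat v_d\rangle$ of order at most $3$ — sit at exactly the prescribed heights. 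Concretely I would exhibit one skew volume-$2$ simplex over $\conv(0,e_1,\ldots,e_{d-1})$ whose single nontrivial parallelepiped point can be placed at any admissible height $j$, and, for the total-$3$ cases, volume-$3$ simplices whose two nontrivial points (inverse to each other in the $\ZZ/3$ quotient) land at the prescribed heights, obtaining the $\delta_j = 2$ case by forcing the two heights to coincide.

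The main obstacle is precisely this tuning: one must choose the last vertex of each simplex so that the parallelepiped lattice points occur at the desired heights \emph{and no unwanted points appear}, and then verify that the resulting family of simplices covers every vector left admissible by (\ref{eq1}) and (\ref{eq2}) — neither more nor fewer. Showing that the constructions exhaust exactly the inequality-admissible vectors, rather than a proper subset, is the real work, since that is what upgrades the necessary conditions (\ref{eq1}), (\ref{eq2}) into a sufficient characterization; by contrast, the height computations themselves are elementary congruence bookkeeping once the simplices are written down.
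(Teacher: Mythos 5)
A preliminary but important observation: the paper you were given does \emph{not} prove this statement. Theorem \ref{HHN} is imported verbatim from \cite{HHN} and is only used as background for the volume-$5$ classification, so the only meaningful comparison is with the proof in that cited source.

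With that understood, your plan follows essentially the same route as \cite{HHN}: the only-if direction is immediate because (\ref{eq1}) and (\ref{eq2}) are Stanley's and Hibi's inequalities, and the if direction is done by enumerating the admissible vectors and realizing each by an explicit lattice simplex, reading $\delta_j$ off as the number of lattice points at height $j$ in the half-open fundamental parallelepiped. Your bookkeeping does close up: the inequalities reduce the list to $1+t^{j}$ with $2j\le d+1$, and $1+t^{i_1}+t^{i_2}$ (allowing $i_1=i_2$) with $i_2\le 2i_1$ and $i_1+i_2\le d+1$; writing the nontrivial coset representative as $\frac13\sum_{i\in S_1}\hat v_i+\frac23\sum_{i\in S_2}\hat v_i$, the two heights are $(|S_1|+2|S_2|)/3$ and $(2|S_1|+|S_2|)/3$, and the system $|S_1|=2i_2-i_1$, $|S_2|=2i_1-i_2$ has a nonnegative solution with $|S_1|+|S_2|=i_1+i_2\le d+1$ exactly under those two inequalities, so the simplex constructions exhaust the admissible list. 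Two caveats. First, your proposal explicitly defers this matching (``the real work''), so as written it is a correct blueprint rather than a finished proof; the cited paper carries the constructions out. Second, in the enumeration for the if direction you may not invoke the interior-point constraint $\delta_i\ge\delta_1$ (for $\delta_d\ne 0$) as a pruning device: it is a property of realizable vectors, not one of the hypotheses of the theorem, so using it risks silently discarding vectors that satisfy (\ref{eq1}) and (\ref{eq2}) and would then be left unrealized. You must instead check that (\ref{eq1}) itself eliminates the offending vectors; it does, e.g.\ for $(1,1,0,\ldots,0,1)$ with $d\ge 3$, inequality (\ref{eq1}) with $i=1$ reads $2\le 1$.
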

However, Theorem \ref{HHN} is not true for $\delta_0+\cdots+\delta_d =4$ (see \cite[Example 1.2]{HHN}).
On the other hand,
in \cite[Theorem 5.1]{HHL},  a complete classification of the possible $\delta$-vectors with  $\delta_0+\cdots+\delta_d =4$ is given.
\begin{Theorem}[{\cite[Theorem 5.1]{HHL}}]
	Let $1+t^{i_1}+t^{i_2}+t^{i_3}$
	be a polynomial with $1 \leq i_1 \leq i_2 \leq i_3 \leq d$.
	Then there exists a lattice polytope $\Pc \subset \RR^d$ of dimension $d$ whose $\delta$-polynomial equals $1+t^{i_1}+t^{i_2}+t^{i_3}$ if and only if $(i_1,i_2,i_3)$ satisfies 
	\[
	i_3 \leq i_1+i_2, i_1+i_3 \leq d+1 \ {\rm and} \ i_2 \leq \lfloor (d+1)/2\rfloor,
	\]
	and the additional conditions 
	\[
	2i_2 \leq i_1+i_3 \ {\rm or} \ i_2+i_3 \leq d+1.
	\]
	Moreover, all these polytopes can be chosen to be simplices.
\end{Theorem}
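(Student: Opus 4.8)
The plan is to prove both implications by working with the fundamental parallelepiped of a lattice simplex. Recall the standard description: for a $d$-simplex $\Pc=\conv\{v_0,\dots,v_d\}\subset\RR^d$, lifting each vertex to $\hat v_i=(v_i,1)\in\ZZ^{d+1}$ and setting $\Pi=\{\sum_{i=0}^d\lambda_i\hat v_i:0\le\lambda_i<1\}$, one has $\delta(\Pc,t)=\sum_{x\in\Pi\cap\ZZ^{d+1}}t^{\,\mathrm{ht}(x)}$, where $\mathrm{ht}(x)$ is the last coordinate of $x$, an integer in $\{0,\dots,d\}$. The lattice points of $\Pi$ form the finite abelian group $G=\ZZ^{d+1}/\langle\hat v_0,\dots,\hat v_d\rangle$, whose order is the normalized volume. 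Since this volume is $4$, it suffices to realize a grading $\mathrm{ht}\colon G\to\ZZ_{\ge0}$ on a group of order $4$ with the prescribed multiset of heights $\{0,i_1,i_2,i_3\}$, and conversely to read off the constraints any such $\delta$-vector must satisfy. This reduces the whole statement to elementary arithmetic.

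For sufficiency I would take $G=\ZZ/4\ZZ=\langle g\rangle$ cyclic and encode the simplex by residues $c_0,\dots,c_d\in\{0,1,2,3\}$, so that $\mathrm{ht}(kg)=\frac14\sum_i((kc_i)\bmod 4)$. Writing $a,b,c$ for the number of indices with $c_i\equiv1,2,3\pmod4$, a short computation gives $\mathrm{ht}(g)=\frac{a+2b+3c}{4}$, $\mathrm{ht}(2g)=\frac{a+c}{2}$, $\mathrm{ht}(3g)=\frac{3a+2b+c}{4}$, so the two ``inverse'' heights satisfy $\mathrm{ht}(g)+\mathrm{ht}(3g)=a+b+c$ while the self-inverse height is $\mathrm{ht}(2g)=\frac{a+c}{2}$. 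Solving this linear system for $(a,b,c)$ and demanding $a,b,c\ge0$ together with $a+b+c\le d+1$ (the number of available coordinates), I would realize the target by two assignments: taking the self-inverse height to be $i_2$ exactly requires $i_3\le i_1+i_2$, $2i_2\le i_1+i_3$ and $i_1+i_3\le d+1$, while taking it to be $i_1$ exactly requires $i_3\le i_1+i_2$ and $i_2+i_3\le d+1$. In both assignments one checks $a\ge1$, so $G$ is genuinely cyclic of order $4$; and the union of the two regions is precisely the region cut out by the theorem. In particular the cyclic construction alone already produces simplices, so $(\ZZ/2\ZZ)^2$ is never needed.

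For necessity I would read the constraints off the known inequalities. Applying Stanley's inequality (\ref{eq1}) at $i=i_1$ gives $i_3\le i_1+i_2$: if instead $i_3>i_1+i_2\ge 2i_1$, then $i_1\le\lfloor i_3/2\rfloor$ is admissible and the inequality reads $2\le1$, a contradiction. When $i_3<d$, applying Hibi's inequality (\ref{eq2}) at $i=d-i_3$ (admissible because any violation of a room condition forces $2i_3>d+1$, hence $d-i_3\le\lfloor(d-1)/2\rfloor$) rules out violations of $i_1+i_3\le d+1$, and in the subcases with $i_1=1$ also of $i_2+i_3\le d+1$. The case $i_3=d$ is settled instead by the elementary relations $\delta_1\ge\delta_d$ and ``$\delta_d\ne0\Rightarrow\delta_i\ge\delta_1$'', which force $i_1=1$ and leave no room for a sparse vector once $d\ge4$. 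The condition $i_2\le\lfloor(d+1)/2\rfloor$ is then a consequence of the room conditions.

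The main obstacle is the disjunctive condition $2i_2\le i_1+i_3$ or $i_2+i_3\le d+1$ in the remaining regime $i_1\ge2$, $2i_2>i_1+i_3$ and $i_2+i_3>d+1$. Here both (\ref{eq1}) and (\ref{eq2}) together with the elementary relations are satisfied---for instance $1+t^2+t^4+t^5$ with $d=7$ passes all of them---so these tools cannot exclude such vectors. To close this gap I would invoke Stapledon's symmetric decomposition $\delta(\Pc,t)=a(t)+t^{\ell}b(t)$, where $\ell=d+1-s$ is the codegree and $a(t),b(t)$ are palindromic with nonnegative coefficients; nonnegativity of the relevant coefficient of $b(t)$ is strictly stronger than (\ref{eq1}) and (\ref{eq2}) and excludes exactly these borderline vectors. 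Verifying that this single extra positivity constraint coincides with failure of the disjunction is the delicate, genuinely volume-$4$ specific, step of the argument.
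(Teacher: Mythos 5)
First, a remark on the comparison itself: the paper you were given never proves this statement --- it is quoted verbatim from \cite{HHL}, so there is no ``paper proof'' to match against, and your proposal must stand on its own. Its constructive half essentially does: the $\ZZ/4\ZZ$ parallelepiped computation is correct, the two assignments (self-inverse height equal to $i_2$, resp.\ $i_1$) realize exactly the regions you claim, their union is the region in the theorem, and the deduction of $i_3\le i_1+i_2$, $i_1+i_3\le d+1$ from (\ref{eq1}), (\ref{eq2}), with $i_2\le\lfloor (d+1)/2\rfloor$ falling out as a consequence, is sound.

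The genuine gap is precisely the step you flag as ``delicate'' and leave unverified, and it cannot be repaired by the tool you name. Stapledon's theorem in \cite{Stap1} decomposes $\bar\delta(t)=(1+t+\cdots+t^{\ell-1})\,\delta(\Pc,t)$, not $\delta(\Pc,t)$ itself; in the form you state it, the decomposition does not even exist for genuine $\delta$-polynomials (for $1+t^2$ with $d=3$, $\ell=2$, palindromicity forces $a_3=a_0=1$ and then $b_1=\delta_3-a_3=-1<0$). Worse, the correct version fails to exclude your own test vector: for $\delta(t)=1+t^2+t^4+t^5$, $d=7$, $\ell=3$, one has $\bar\delta(t)=1+t+2t^2+t^3+2t^4+2t^5+2t^6+t^7=a(t)+t^3b(t)$ with $a(t)=1+t+2t^2+t^3+t^4+2t^5+t^6+t^7$ and $b(t)=t+t^3$, both palindromic with nonnegative coefficients, and even Stapledon's additional inequalities $1=a_0\le a_1\le a_i$ hold. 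So every inequality extractable from the symmetric decomposition is satisfied by this non-$\delta$-vector, and no argument of the proposed shape can close the case $i_1\ge 2$, $2i_2>i_1+i_3$, $i_2+i_3>d+1$.

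The fix is a tool you already have in hand, used in reverse. If $i_1\ge2$ then $\delta_1=0$, so $\Pc$ has exactly $d+1$ lattice points and is therefore a simplex, and its parallelepiped group $G$ has order $4$. If $G\cong\ZZ/4\ZZ$, your own formulas give $2\,\mathrm{ht}(2g)=a+c\le a+b+c=\mathrm{ht}(g)+\mathrm{ht}(3g)\le d+1$ and $|\mathrm{ht}(g)-\mathrm{ht}(3g)|\le\mathrm{ht}(2g)$, and sorting the multiset by where $\mathrm{ht}(2g)$ lands yields exactly ``($2i_2\le i_1+i_3$ and $i_1+i_3\le d+1$) or $i_2+i_3\le d+1$''. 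If $G\cong(\ZZ/2\ZZ)^2$, the three nonzero elements are involutions with supports $S_1$, $S_2$, $S_1\triangle S_2$, which forces $i_1+i_2+i_3=|S_1\cup S_2|\le d+1$, a subregion of the admissible one. Non-simplices have $i_1=1$ and are disposed of by your (\ref{eq2}) argument. This group-theoretic (equivalently, Hermite-normal-form) analysis is in essence how \cite{HHL} proves the theorem; Stapledon's inequalities are a dead end here.
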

We remark that there exists a sequence  $(\delta_0,\ldots,\delta_d)$
of nonnegative integers such that $(\delta_0,\ldots,\delta_d)$ is not the $\delta$-vector of any lattice simplex but it is the $\delta$-vector of some lattice non-simplex
 (\cite[Remark 5.3]{HHL}).

\subsection{Main result: characterization of $\delta$-vectors with $\sum_{i=0}^{d} \delta_i =5$ }
In \cite{Higprime},
Higashitani classified all the possible $\delta$-vectors of lattice 
simplices whose normalized volumes are $5$.
\begin{Theorem}[{\cite[Theorem 1.2]{Higprime}}]
	\label{Hig5}
	Let $1+t^{i_1}+t^{i_2}+t^{i_3}+t^{i_4}$ be a polynomial with some positive integers $ i_1 \leq \cdots \leq i_4 \leq d$.
	Then there exists a lattice simplex of dimension $d$ whose $\delta$-polynomial equals $1+t^{i_1}+t^{i_2}+t^{i_3}+t^{i_4}$ if and only if the following conditions are satisfied:
	\begin{itemize}
		\item $i_1+i_4=i_2+i_3 \leq d+1$;
		\item $i_k + i_{\ell} \geq i_{k+\ell}$ for $1 \leq k \leq \ell \leq 4$ with $k+\ell \leq 4$.
	\end{itemize}
	\end{Theorem}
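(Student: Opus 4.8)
The plan is to translate the problem into the combinatorics of weight systems. Since the normalized volume $\delta(\Pc,1)=5$ is prime, for any lattice simplex $\Pc$ of dimension $d$ realizing the given $\delta$-polynomial the quotient group $\ZZ^{d+1}/\langle\hat v_0,\dots,\hat v_d\rangle$, where $\hat v_i=(v_i,1)$ are the homogenized vertices, is cyclic of order $5$. Hence $\Pc$ is encoded by a weight vector $(q_0,\dots,q_d)$ with $q_i\in\{0,1,2,3,4\}$ and $\sum_i q_i\equiv0\pmod5$, and, writing $\langle x\rangle=x-\lfloor x\rfloor$ for the fractional part, the $\delta$-polynomial is
\[
\delta(\Pc,t)=\sum_{k=0}^{4}t^{h(k)},\qquad h(k)=\sum_{i=0}^{d}\Big\langle\frac{k q_i}{5}\Big\rangle .
\]
Thus the theorem reduces to characterizing which multisets $\{h(1),h(2),h(3),h(4)\}$ occur. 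I would record two elementary facts: since $\langle x\rangle+\langle-x\rangle=1$ for $x\notin\ZZ$, one has $h(k)+h(5-k)=m$ for $k=1,2$, where $m=\#\{i:q_i\neq0\}\le d+1$; and since $\langle x\rangle+\langle y\rangle\ge\langle x+y\rangle$, one has the superadditivity $h(k)+h(\ell)\ge h(k+\ell)$ whenever $k+\ell\le4$.

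For necessity, the pairing $h(1)+h(4)=h(2)+h(3)=m\le d+1$ forces $i_1+i_4=i_2+i_3=m\le d+1$ after sorting, because four numbers split into two pairs of equal sum $m$ always satisfy $(\min)+(\max)=m$. For the remaining inequalities I would first use the action of $(\ZZ/5)^{*}$ on generators, which reindexes $(h(1),h(2),h(3),h(4))$ through multiplication by a unit, to arrange that $h(1)$ is the smallest height; then $h(4)=m-h(1)$ is the largest, so $i_1=h(1)$, $i_4=h(4)$, and $\{i_2,i_3\}=\{h(2),h(3)\}$. Given the symmetry, the four conditions $i_k+i_\ell\ge i_{k+\ell}$ collapse to just $2i_1\ge i_2$ and $i_1+i_2\ge i_3$. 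The first holds since $i_2\le h(2)\le 2h(1)=2i_1$. For the second, if $h(2)\le h(3)$ it is exactly the superadditive relation $h(1)+h(2)\ge h(3)$; if $h(2)>h(3)$ the required $h(1)+h(3)\ge h(2)$ is equivalent, via $h(2)=m-h(3)$ and $m=h(1)+h(4)$, to $2h(3)\ge h(4)$, and this follows from $h(1)+h(3)\ge h(4)$ combined with the minimality $h(1)\le h(3)$.

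For sufficiency I would run this in reverse and build the weights. Grouping weights by value, set $n_j=\#\{i:q_i=j\}$; the height equations become $M(n_1,n_2,n_3,n_4)^{\mathsf{T}}=5(h(1),h(2),h(3),h(4))^{\mathsf{T}}$ with $M=(kj\bmod5)_{k,j=1}^{4}$, whose kernel is spanned by $(1,-1,-1,1)$ and whose image is cut out by $h_1-h_2-h_3+h_4=0$. Assigning the sorted values $h(k)=i_k$ (a consistent system precisely because $i_1+i_4=i_2+i_3$) and taking $n_4=t$ as free parameter, solving gives
\[
n_1=-3i_1+i_2+2i_3+t,\quad n_2=i_1+i_2-i_3-t,\quad n_3=2i_1-i_2-t,\quad n_4=t .
\]
One checks automatically that $\sum_{j\ge1}n_j=i_2+i_3=m$ and $\sum_j j\,n_j=5i_1\equiv0\pmod5$. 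The nonnegativity constraints read $\max(0,\,3i_1-i_2-2i_3)\le t\le\min(2i_1-i_2,\,i_1+i_2-i_3)$, and a direct check shows this interval is nonempty exactly when $2i_1\ge i_2$ and $i_1+i_2\ge i_3$ (the two remaining constraints holding automatically from $i_1\le i_2\le i_3$). Choosing such an integer $t$ and putting $n_0=d+1-m\ge0$ yields a weight vector, hence a lattice simplex of dimension $d$ and normalized volume $5$ with the prescribed $\delta$-polynomial.

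I expect the main obstacle to be the necessity of the superadditive inequalities for the \emph{sorted} sequence: the raw superadditivity of $h$ is tied to the chosen generator, and the one genuinely nontrivial point is reconciling it with the sorted order, namely proving $h(1)+h(3)\ge h(2)$ in the case $h(2)>h(3)$. The clean resolution above, deducing $2h(3)\ge h(4)$ from $h(1)+h(3)\ge h(4)$ and $h(1)\le h(3)$, is the crux; once the weight-system dictionary is in place everything else is bookkeeping.
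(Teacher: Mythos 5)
This statement is imported by the paper from \cite[Theorem 1.2]{Higprime} and is not proved in the paper at all, so there is no internal proof to compare against; your proposal is, however, a correct proof, and it follows essentially the same weight-system route as Higashitani's original argument (encode a prime-volume simplex by a generator $(q_0,\dots,q_d)$ of the cyclic group of order $5$, express the $\delta$-polynomial as $\sum_{k=0}^{4}t^{h(k)}$ with $h(k)=\sum_i\langle kq_i/5\rangle$, and characterize the attainable height multisets). I checked the details: the pairing $h(k)+h(5-k)=m$ and superadditivity give necessity, with your unit-action normalization $h(1)=\min_k h(k)$ and the deduction $2h(3)\ge h(1)+h(3)\ge h(4)$ correctly handling the only delicate case; and in the sufficiency direction your parametrization $n_1=-3i_1+i_2+2i_3+t$, $n_2=i_1+i_2-i_3-t$, $n_3=2i_1-i_2-t$, $n_4=t$ does solve $Mn=5(i_1,i_2,i_3,i_4)^{\mathsf{T}}$ once $i_4=i_2+i_3-i_1$, with the integer interval for $t$ nonempty exactly under $2i_1\ge i_2$ and $i_1+i_2\ge i_3$, and $n_0=d+1-m\ge 0$ exactly under $i_2+i_3\le d+1$. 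The only point taken on faith is the standard dictionary between such weight vectors (with $\sum_j jn_j\equiv 0 \pmod 5$) and lattice simplices of normalized volume $5$, which is precisely the machinery of \cite{HHL} and \cite{Higprime}, so this is a citation rather than a gap.
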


In the present paper, we will classify all the possible $\delta$-vectors of lattice 
polytopes whose normalized volumes are $5$.
In fact, we will show the following theorem.
\begin{Theorem}
	\label{main}
Let $1+t^{i_1}+t^{i_2}+t^{i_3}+t^{i_4}$ be a polynomial with some positive integers $ i_1 \leq \cdots \leq i_4 \leq d$.
Then there exists a lattice polytope of dimension $d$ whose $\delta$-polynomial equals $1+t^{i_1}+t^{i_2}+t^{i_3}+t^{i_4}$ if and only if $(i_1,i_2,i_3,i_4)$ satisfies the condition of Theorem {\rm \ref{Hig5}} or one of the following conditions:
\begin{enumerate}
	\item $(i_1,i_2,i_3,i_4)=(1,1,1,2)$ and $d \geq 2$;
	\item $(i_1,i_2,i_3,i_4)=(1,2,2,2)$ and $d \geq 3$;
	\item $(i_1,i_2,i_3,i_4)=(1,2,3,3)$ and $d \geq 5$.
\end{enumerate}
In particular, we cannot obtain the $\delta$-polynomials of $(1)$, $(2)$ and $(3)$ by lattice simplices.
\end{Theorem}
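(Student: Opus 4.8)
The plan is to prove the two implications separately, and in each to split according to whether $\delta_1=0$ or $\delta_1\ge 1$, equivalently according to whether $|\Pc\cap\ZZ^d|=d+1$ or not. For the necessity, suppose a $d$-dimensional lattice polytope $\Pc$ has $\delta$-polynomial $1+t^{i_1}+\cdots+t^{i_4}$ and write $s=i_4$ for its degree. If $\delta_1=0$, then $\Pc$ has exactly $d+1$ lattice points, which are then its vertices, so $\Pc$ is a simplex and the necessity part of Theorem \ref{Hig5} forces the Higashitani condition. The core of the argument is the case $\delta_1\ge 1$, where I would first show that $\Pc$ is \emph{spanning}, i.e.\ its lattice points affinely generate $\ZZ^d$. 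Indeed, if not, let $\Lambda'$ be the affine sublattice generated by $\Pc\cap\ZZ^d$, of some index $m\ge 2$ in $\ZZ^d$; the normalized volume of $\Pc$ computed with respect to $\Lambda'$ equals $5/m$, and being a positive integer with $5$ prime this forces $m=5$, so $\Pc$ is a unimodular simplex with respect to $\Lambda'$. But then $\Pc\cap\ZZ^d=\Pc\cap\Lambda'$ has only $d+1$ elements, contradicting $\delta_1\ge 1$.

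With $\Pc$ spanning, I would invoke the known fact that the $\delta$-vector of a spanning lattice polytope has no internal zeros; since $\delta_0=\delta_s=1\neq 0$ this gives $\delta_i\ge 1$ for all $0\le i\le s$. This contiguity is very restrictive: from $\delta_0=1$ and $\sum_{i\ge 1}\delta_i=4$ with each $\delta_i\ge 1$ for $1\le i\le s$, one gets $s\le 4$, leaving only finitely many candidate tuples. Enumerating them, the non-symmetric candidates are exactly $(1,1,2,3)$, $(1,1,1,2)$, $(1,2,2,2)$ and $(1,2,3,3)$, while the symmetric ones are precisely those already covered by Theorem \ref{Hig5}. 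I would discard $(1,1,2,3)$ using Stanley's inequality (\ref{eq1}): for $i=1$ it reads $1+\delta_1\le\delta_s+\delta_{s-1}$, which fails here. Finally the admissible dimensions are pinned down by the elementary constraints $\delta_1\ge\delta_d$ (excluding $d=2$ for $(1,2,2,2)$ and $d\le 3$ for $(1,2,3,3)$) and Hibi's inequality (\ref{eq2}) (excluding $d=4$ for $(1,2,3,3)$), which reproduces the bounds $d\ge 2,3,5$ of $(1),(2),(3)$.

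For the sufficiency, the tuples satisfying Higashitani's condition are realized by the simplices of Theorem \ref{Hig5}, so it remains to realize the three exceptional polynomials. Here I would use that a lattice pyramid over a $d$-polytope with $\delta$-polynomial $f(t)$ is a $(d+1)$-polytope with the same $\delta$-polynomial, so it suffices to build one polytope in the minimal dimension of each case and then pass to iterated pyramids. Concretely, for $(1,1,1,2)$ I would exhibit a lattice polygon of area $5/2$ with one interior and five boundary lattice points, realizing $1+3t+t^2$; for $(1,2,2,2)$ a $3$-polytope with five lattice points realizing $1+t+3t^2$; and for $(1,2,3,3)$ a $5$-polytope with seven lattice points realizing $1+t+t^2+2t^3$. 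In each case the $\delta$-polynomial is checked by directly counting lattice points in the first few dilates. The final assertion is then immediate: none of $(1,1,1,2)$, $(1,2,2,2)$, $(1,2,3,3)$ satisfies $i_1+i_4=i_2+i_3$, so by Theorem \ref{Hig5} none of them is the $\delta$-polynomial of a simplex.

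The main obstacle is that the classical inequalities (\ref{eq1}) and (\ref{eq2}) are genuinely insufficient for the necessity: a tuple such as $(1,2,4,4)$, whose $\delta$-vector $(1,1,1,0,2,0,\dots)$ has a gap at position $3$, satisfies both (\ref{eq1}) and (\ref{eq2}) for all large $d$ yet is not realizable. Eliminating such gap vectors is exactly what the spanning, no-internal-zeros input achieves, so the delicate conceptual step is the prime-volume reduction that forces $\Pc$ to be spanning once $\delta_1\ge 1$. The remaining concrete difficulty is the construction for $(1,2,3,3)$: its minimal dimension is $5$, the polytope must carry strictly more than $d+1$ lattice points, and $1+t+t^2+2t^3$ does not factor, so the polytope cannot be assembled from lower-dimensional pieces via joins or free sums and must be written down and verified explicitly.
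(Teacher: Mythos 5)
Your strategy is essentially the paper's own: split off the case $|\Pc\cap\ZZ^d|=d+1$ (empty simplices, handled by Theorem \ref{Hig5}), prove that a prime-volume polytope with $\delta_1\geq 1$ is spanning, apply the Hofscheier--Katth\"{a}n--Nill theorem (Lemma \ref{span}) to get $\delta_i\geq 1$ for $0\leq i\leq s$, and prune the resulting finite list using (\ref{eq1}), (\ref{eq2}) and $\delta_1\geq\delta_d$, realizing the survivors by minimal-dimensional examples plus iterated lattice pyramids (Lemma \ref{pyr}). Your enumeration, the exclusion of $(1,1,2,3)$ by Stanley's inequality, and the dimension bounds for the three exceptional cases all agree with the paper's proof. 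Your argument for the spanning step is a correct variant of Theorem \ref{span5}: the paper triangulates $\Pc$ into $k\geq 2$ lattice simplices and observes that the index divides each of the normalized volumes summing to the prime $p$, whereas you pass to the sublattice $\Lambda'$ generated by $\Pc\cap\ZZ^d$ and note that the $\Lambda'$-normalized volume $5/m$ is a positive integer, so $m\in\{1,5\}$, and $m=5$ would make $\Pc$ unimodular with respect to $\Lambda'$, hence give it only $d+1$ lattice points. Both arguments are sound and of comparable depth; yours avoids invoking a triangulation.

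The one genuine gap is on the sufficiency side: the three exceptional polynomials are never actually realized. For $(1,1,1,2)$ your specification (a lattice polygon of area $5/2$ with one interior and five boundary lattice points) does pin down the $\delta$-vector, since for polygons $\delta=(1,\,|\Pc\cap\ZZ^2|-3,\,|(\Pc\setminus\partial\Pc)\cap\ZZ^2|)$, and such polygons are easy to produce, e.g. $\conv({\bf 0},\eb_1,\eb_2,2\eb_1+3\eb_2)$. But for $(1,2,2,2)$ and $(1,2,3,3)$, prescribing ``a $3$-polytope with five lattice points'' or ``a $5$-polytope with seven lattice points'' does not determine the $\delta$-polynomial --- for instance a $3$-dimensional lattice simplex with five lattice points can have $\delta$-vector $(1,1,2,1)$ rather than $(1,1,3,0)$ --- and the existence of a polytope with exactly the required $\delta$-polynomial is precisely what must be proved; you acknowledge this but leave it open. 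The paper closes this with Example \ref{ex}: $\conv({\bf 0},\eb_1,\eb_2,\eb_3,\eb_1+\eb_2+3\eb_3)\subset\RR^3$ realizes $1+t+3t^2$, and $\conv({\bf 0},\eb_1,\ldots,\eb_5,-\eb_1+\eb_2+\eb_3+\eb_4+2\eb_5)\subset\RR^5$ realizes $1+t+t^2+2t^3$. These explicit constructions (or substitutes), verified by counting lattice points in the relevant dilates, are an indispensable part of the proof and must be supplied to complete your argument.
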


\subsection{Structure of this paper}
The present paper is organized as follows:
First, in Section $1$, we will discuss some properties of lattice polytopes whose normalized volumes are prime integers.
In particular, we will show that every lattice polytope of  which is not an empty simplex and whose normalized volume equals a prime integer is always a spanning polytope (Theorem \ref{span5}).
This is a key result in the present paper.
Finally, in Section $2$, by using this result we will prove Theorem \ref{main}.

\begin{acknowledgement}{\rm
		The author would like to thank anonymous referees for reading the manuscript carefully.
		The author is partially supported by Grant-in-Aid for JSPS Fellows 16J01549.
	}
\end{acknowledgement}

\section{Lattice polytopes with prime volumes}
In this section, we will discuss some properties of lattice polytopes whose normalized volumes are prime integers.

Let $\Pc \cap \ZZ^d$ be a lattice polytope of dimension $d$ and 
$\langle \Pc \cap \ZZ^d \rangle_{\ZZ}$ the affine sublattice generated by $\Pc \cap \ZZ^d$.
We call the \textit{index} of $\Pc$ the index of 
$\langle \Pc \cap \ZZ^d \rangle_{\ZZ}$ as a sublattice of $\ZZ^d$.
We say that $\Pc$  is 
 \textit{spanning} if its index equals $1$.
This is equivalent to that any lattice point in $\ZZ^{d+1}$ is a linear integer combination of the lattice points in $\Pc \times \{1\}$.
A lattice simplex is called \textit{empty} if it has no lattice point expect for its vertices.
Now, we prove the following theorem.
\begin{Theorem}
	\label{span5}
	Let $p$ be a prime integer and
	$\Pc \subset \RR^d$ be a lattice polytope of dimension $d$  whose normalized volume equals $p$.
	Suppose that $\Pc$ is not an empty simplex. Then $\Pc$ is spanning.
\end{Theorem}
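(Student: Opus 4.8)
The plan is to compare the normalized volume of $\Pc$ measured against the full lattice $\ZZ^d$ with its normalized volume measured against the affine sublattice $\Lambda := \langle \Pc \cap \ZZ^d \rangle_{\ZZ}$ generated by the lattice points of $\Pc$, and then to invoke primality of $p$. First I would translate a vertex of $\Pc$ to the origin, so that $\Lambda$ becomes an honest rank-$d$ sublattice of $\ZZ^d$; write $m = [\ZZ^d : \Lambda]$ for its index, so that by definition $\Pc$ is spanning precisely when $m=1$. The crucial elementary observation is that every lattice point of $\Pc$ already lies in $\Lambda$, whence $\Pc \cap \ZZ^d = \Pc \cap \Lambda$: passing to the coarser lattice $\Lambda$ loses none of the lattice points of $\Pc$.

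Next I would establish the multiplicativity of the normalized volume under this change of lattice. Choosing a basis of $\Lambda$ yields a linear map $A \colon \RR^d \to \RR^d$ with $A(\ZZ^d) = \Lambda$ and $|\det A| = m$. Then $A^{-1}\Pc$ is a $d$-dimensional lattice polytope with respect to $\ZZ^d$ whose lattice points correspond bijectively (via $A^{-1}$) to those of $\Pc$ in $\Lambda$, and whose Euclidean volume equals $\vol(\Pc)/m$. Since the normalized volume of a full-dimensional lattice polytope is $d!$ times its Euclidean volume relative to the ambient lattice, this gives
\[
p = \delta(\Pc, 1) = m \cdot \delta(A^{-1}\Pc, 1),
\]
a factorization of $p$ into a product of two positive integers; the second factor is a positive integer at least $1$ because $A^{-1}\Pc$ is a genuine $d$-dimensional lattice polytope.

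Since $p$ is prime, either $m = 1$, in which case $\Pc$ is spanning and we are done, or $m = p$ and $\delta(A^{-1}\Pc, 1) = 1$. In the latter case $A^{-1}\Pc$ has normalized volume $1$, hence is a unimodular simplex; in particular it is a simplex containing no lattice point other than its vertices. Transporting back by $A$, the polytope $\Pc$ is then a simplex whose only points of $\Lambda$ are its vertices, and since $\Pc \cap \ZZ^d = \Pc \cap \Lambda$ this forces $\Pc$ to be an empty simplex --- contradicting the hypothesis. Therefore $m = 1$ and $\Pc$ is spanning.

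The volume-multiplicativity and the lattice-point bijection under $A$ are standard; the point that must be handled with care is the identification $\Pc \cap \ZZ^d = \Pc \cap \Lambda$, since this is exactly what guarantees that the normalized-volume-one case genuinely yields an \emph{empty} simplex rather than merely a unimodular simplex relative to a finer lattice. I expect the only real subtlety to be justifying the claim $\delta(A^{-1}\Pc,1) = 1 \implies$ unimodular simplex, i.e.\ that $1$ is the minimal possible normalized volume of a $d$-dimensional lattice polytope and is attained exactly by unimodular simplices (via any triangulation into lattice simplices, each of normalized volume at least $1$).
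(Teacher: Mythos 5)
Your proof is correct, but it is organized differently from the paper's. The paper argues directly: since $\Pc$ is not an empty simplex, it admits a lattice triangulation into $k \geq 2$ full-dimensional lattice simplices; the index $m$ of $\langle \Pc \cap \ZZ^d\rangle_{\ZZ}$ divides the normalized volume of each simplex, so $m$ divides their sum $p$, and $m = p$ is impossible because the sum of $k \geq 2$ terms, each at least $m$, would then exceed $p$. You instead renormalize globally: writing $p = m \cdot q$ where $q$ is the normalized volume of $\Pc$ with respect to the sublattice $\Lambda$, primality forces $m = 1$ or $q = 1$, and you rule out $q = 1$ by the characterization of normalized-volume-one polytopes as unimodular (hence empty) simplices, together with the observation $\Pc \cap \ZZ^d = \Pc \cap \Lambda$. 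Both arguments rest on the same divisibility phenomenon (index divides normalized volume) and both ultimately invoke a triangulation fact --- the paper to produce $k \geq 2$ cells, you to prove that $q = 1$ implies $\Pc$ is a $\Lambda$-unimodular simplex. What the paper's version buys is brevity: non-emptiness is used once, up front. What your version buys is a sharper structural statement: it shows that for prime normalized volume the index is either $1$ or $p$, and that index $p$ occurs exactly when $\Pc$ is an empty simplex, which explains \emph{why} empty simplices are the unavoidable exception in the theorem rather than merely excluding them by hypothesis.
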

\begin{proof}
	Since $\Pc$ is not an empty simplex,
	there exists a lattice triangulation $\{\Delta_1,\ldots,\Delta_k \}$  of $\Pc$ with  some positive integer $k \geq 2$.
	Since the index of
	$P$ must divide the normalized volume of
	every $\Delta_i$, and since the sum of those normalized volumes is the prime integer
	$p$, the index must be one.
	Hence $\Pc$ is spanning.
\end{proof}

Next, we consider an application of this result to classifying lattice polytopes whose normalized volumes are prime integers.
Thanks to Theorem \ref{span5},
every full-dimensional lattice polytope whose normalized volumes equals $5$ is either an empty simplex or a spanning polytope.
See e.g., \cite{HZ} for how to classify empty simplices.
Now, we focus on spanning polytopes.
For a lattice polytope $\mathcal{P} \subset \RR^d$,
the {\em lattice pyramid} over $\mathcal{P}$ is defined by $\text{conv}(\mathcal{P}\times \left\{ 0 \right\} ,(0,\ldots,0,1))$ $\subset \RR^{d+1}$. We denote this by $\text{Pyr}(\mathcal{P})$. 
Let us recall the following result.
\begin{Lemma}[{\cite[Corollary 2.4]{HKN}}]
	\label{spanclass}
	There are only finitely many spanning lattice polytopes of given normalized volume (and arbitrary dimension) up to unimodular equivalence and lattice pyramid constructions.
\end{Lemma}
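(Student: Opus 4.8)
The plan is to separate the "lattice pyramid" degrees of freedom from a finite combinatorial core and then invoke a classical finiteness theorem. First I would record the two properties that make the stated equivalence the right one to quotient by: passing from $\Pc$ to its lattice pyramid $\text{Pyr}(\Pc)$ leaves the Ehrhart series—and hence the $\delta$-polynomial and the normalized volume—unchanged, and it preserves the index (the apex contributes the new coordinate direction unimodularly), so $\Pc$ is spanning if and only if $\text{Pyr}(\Pc)$ is. Consequently every spanning polytope of normalized volume $V$ is an iterated lattice pyramid over a spanning polytope of the same volume that is \emph{not} itself a lattice pyramid, and it suffices to bound the number of these non-pyramidal representatives.

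I would then reduce the whole statement to the single assertion that the dimension $d$ of a spanning, non-pyramidal lattice polytope of normalized volume $V$ is bounded by a constant $C(V)$. Granting this, finiteness is classical: for each fixed dimension there are only finitely many lattice polytopes of bounded normalized volume up to unimodular equivalence (Lagarias--Ziegler). Here the volume already controls the point count, since $|\Pc \cap \ZZ^d| = \delta_1 + d + 1 \le V + d$ because $\delta_1 \le V-1$; after translating a vertex to the origin one confines the lattice points, up to $\GL_d(\ZZ)$, to a region depending only on $V$ and $d$, which leaves finitely many configurations. Ranging over the finitely many admissible dimensions $d \le C(V)$ then completes the count.

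The heart of the matter—and the step I expect to be the main obstacle—is the dimension bound $d \le C(V)$, and this is exactly where the spanning hypothesis is indispensable and cannot be dropped: empty simplices of normalized volume $V>1$ are non-pyramidal lattice polytopes of arbitrarily large dimension and fixed volume, and they are precisely the objects excluded by index one. Any proof must therefore use spanning essentially. The route I would pursue is to exploit that a spanning polytope ought to contain a unimodular simplex among its lattice points, giving it a full-dimensional volume-one core, while the number of remaining lattice points is capped by $\delta_1 \le V-1$; a polytope that is not a lattice pyramid must "spend" these few extra points to keep every coordinate direction active, and quantifying this trade-off should yield a bound of the form $d \le C(V)$ that is linear in $V$ and already sharp for the prisms $\Delta_{d-1} \times [0,1]$ (normalized volume $d$, spanning, non-pyramidal). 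Turning this heuristic into a clean inequality—and in particular establishing the structural input that a spanning polytope carries a unimodular full-dimensional sub-simplex—is the genuinely difficult part; the pyramid invariance and the Lagarias--Ziegler finiteness are then routine.
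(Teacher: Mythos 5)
A point of context first: the paper does not prove this statement at all---it is quoted verbatim as \cite[Corollary 2.4]{HKN}---so the only meaningful comparison is with the proof of Hofscheier--Katth\"an--Nill. Your skeleton is in fact exactly theirs: lattice pyramids preserve the normalized volume and the spanning property, so one may pass to non-pyramidal representatives; the Lagarias--Ziegler theorem gives finiteness in each fixed dimension; hence everything reduces to a bound $d \leq C(V)$ on the dimension of a spanning, non-pyramidal lattice polytope of normalized volume $V$. The problem is that this last step, which you yourself identify as the heart of the matter, is left as a heuristic, so the proposal has a genuine gap precisely at its load-bearing point. The route you sketch for closing it is also doubtful: that a spanning polytope must contain a full-dimensional unimodular simplex with vertices among its lattice points is not established (and, to my knowledge, not known in general---spanning only says the lattice points affinely generate $\ZZ^d$, which for general point configurations does not force a unimodular subconfiguration); and even granting it, the passage from ``at most $\delta_1 \leq V-1$ extra lattice points must keep every direction active'' to an actual bound on $d$ is never made precise. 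Being a lattice pyramid is a condition about all lattice points lying in a hyperplane with one apex at lattice height one, not about affine spans of a few extra points, so the trade-off you describe does not obviously quantify.

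The gap is filled in \cite{HKN} by two quotable results, both of which are already present in this very paper. First, Lemma~\ref{span} (HKN's Theorem 1.3): a spanning polytope has $\delta_i \geq 1$ for all $0 \leq i \leq s$, hence its degree satisfies $s \leq V-1$, since the $\delta_i$ sum to $V$. Second, a theorem of Batyrev from the same reference \cite{Bat} that the paper cites for pyramid-invariance of the $\delta$-polynomial: a $d$-dimensional lattice polytope of degree $s$ and normalized volume $V$ which is \emph{not} a lattice pyramid satisfies $d \leq f(s,V)$ for an explicit function (of the order of $V(2s+1)$). Chaining the two gives $d \leq f(V-1,V) =: C(V)$ for spanning non-pyramidal polytopes, and your Lagarias--Ziegler step then finishes the argument. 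So your outline is the correct one, and your observation that spanning is indispensable (empty simplices give non-pyramidal polytopes of fixed volume and unbounded dimension) is exactly the right sanity check; but the missing structural input is Batyrev's dimension bound for non-pyramids combined with the degree bound coming from Lemma~\ref{span}, not an existence statement about unimodular sub-simplices.
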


By combining Theorem \ref{span} and Lemma \ref{spanclass}, we can obtain the following corollary.
\begin{Corollary}
	Let $p$ be a prime integer and $\Pc$ a  lattice polytope of dimension $d$ whose normalized volume equals $p$.
	Suppose that $\Pc$ is not an empty simplex.
	Then there are only finitely many possibilities for $\Pc$ up to unimodular equivalence and lattice pyramid constructions.
\end{Corollary}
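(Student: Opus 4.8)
The plan is to derive this purely by chaining together the two results that immediately precede the statement, so the proof will be very short. First I would observe that the hypotheses of Theorem~\ref{span5} are exactly met: $\Pc$ has prime normalized volume $p$ and is assumed not to be an empty simplex. Applying that theorem therefore yields at once that $\Pc$ is spanning. This is the only place where the primality of $p$ is genuinely used, and it is used through Theorem~\ref{span5} rather than directly.

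Next I would feed this conclusion into Lemma~\ref{spanclass} (the cited \cite[Corollary 2.4]{HKN}). Since $\Pc$ is now known to be a spanning lattice polytope of normalized volume $p$, the lemma asserts that there are only finitely many such polytopes up to unimodular equivalence and lattice pyramid constructions. As $p$ is a fixed prime, the normalized volume is a single fixed value, so the finiteness applies to our situation without modification. Combining the two steps gives the claimed finiteness for $\Pc$.

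The structure of the argument is thus: (i) invoke Theorem~\ref{span5} to upgrade ``not an empty simplex'' to ``spanning''; (ii) invoke Lemma~\ref{spanclass} to extract finiteness for spanning polytopes of fixed volume; (iii) conclude. There is no real obstacle in the corollary itself, since both the hard structural input (that non-empty-simplex prime-volume polytopes are spanning) and the hard finiteness input (finiteness of spanning polytopes of bounded volume) are already available as black boxes. The only point requiring a moment of care is the bookkeeping around the equivalence relation: the finiteness is only up to unimodular equivalence \emph{and} lattice pyramid constructions, and one must state the conclusion with the same pair of operations, which is exactly how the corollary is phrased. I would also note in passing that the reference ``Theorem~\ref{span}'' appearing just before the corollary is a typo for Theorem~\ref{span5}, the spanning result proved above.
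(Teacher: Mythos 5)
Your proof is correct and is exactly the paper's argument: the paper derives the corollary by combining Theorem~\ref{span5} with Lemma~\ref{spanclass}, precisely the two-step chain you describe. You are also right that the paper's reference to ``Theorem~\ref{span}'' just before the corollary is a typo for Theorem~\ref{span5}.
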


\section{Proof of Theorem \ref{main}}

In this section we will prove Theorem \ref{main}.
First, recall the following lemmas.
\begin{Lemma}[\cite{Bat}]
	\label{pyr}
	Let $\Pc \subset \RR^d$ be a lattice polytope of dimension $d$.
	Then one has
	$$\delta(\textnormal{Pyr}(\Pc),t)=\delta(\Pc,t).$$
\end{Lemma}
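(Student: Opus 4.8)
The plan is to compute the Ehrhart series of $\textnormal{Pyr}(\Pc)$ by slicing the pyramid along its axis and to observe that, at the level of Ehrhart functions, passing from $\Pc$ to $\textnormal{Pyr}(\Pc)$ is nothing but a partial summation. Since partial summation corresponds to multiplication of the Ehrhart series by $1/(1-t)$, and since $\textnormal{Pyr}(\Pc)$ has dimension $d+1$ so that its $\delta$-polynomial carries the extra factor $(1-t)^{d+2}$ rather than $(1-t)^{d+1}$, this extra factor absorbs exactly the $1/(1-t)$ and returns $\delta(\Pc,t)$.

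First I would describe the lattice points of $n\,\textnormal{Pyr}(\Pc)$ according to their last coordinate. Writing $\textnormal{Pyr}(\Pc)=\textnormal{conv}(\Pc\times\{0\},(0,\ldots,0,1))$, the dilate $n\,\textnormal{Pyr}(\Pc)$ is the convex hull of $n\Pc\times\{0\}$ together with the apex $(0,\ldots,0,n)$. A one-parameter description of this convex hull shows that, for $0\le k\le n$, the slice at integer height $k$ (the points whose final coordinate equals $k$) projects in the first $d$ coordinates onto $(n-k)\Pc$. Hence that slice contains exactly $L_{\Pc}(n-k)$ lattice points, where I use the convention $L_{\Pc}(0)=1$ coming from the constant term of the Ehrhart polynomial; in particular the degenerate apex slice $k=n$ contributes the single point $L_{\Pc}(0)=1$. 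Summing over the heights then gives
$$L_{\textnormal{Pyr}(\Pc)}(n)=\sum_{k=0}^{n}L_{\Pc}(n-k)=\sum_{j=0}^{n}L_{\Pc}(j).$$

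Passing to generating functions, set $\mathrm{Ehr}_{\Pc}(t)=\sum_{n\ge 0}L_{\Pc}(n)t^n$, so that $\delta(\Pc,t)=(1-t)^{d+1}\mathrm{Ehr}_{\Pc}(t)$ by definition. The partial-sum identity above translates into $\sum_{n\ge 0}L_{\textnormal{Pyr}(\Pc)}(n)t^n=\tfrac{1}{1-t}\,\mathrm{Ehr}_{\Pc}(t)$, the standard effect of partial summation on a power series. Since $\dim\textnormal{Pyr}(\Pc)=d+1$, the definition of the $\delta$-polynomial yields $\delta(\textnormal{Pyr}(\Pc),t)=(1-t)^{d+2}\cdot\tfrac{1}{1-t}\,\mathrm{Ehr}_{\Pc}(t)=(1-t)^{d+1}\mathrm{Ehr}_{\Pc}(t)=\delta(\Pc,t)$, as claimed. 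The computation is essentially routine; the only step that genuinely needs care is the geometric claim that the height-$k$ slice of $n\,\textnormal{Pyr}(\Pc)$ is precisely $(n-k)\Pc$, together with the correct bookkeeping of the degenerate apex slice, since it is exactly this identification that makes the Ehrhart function of the pyramid a partial sum of that of $\Pc$. Once it is verified, the generating-function manipulation and the cancellation of the single extra $(1-t)$ factor are immediate.
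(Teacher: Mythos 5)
Your proof is correct. The paper does not actually prove this lemma---it is quoted from Batyrev \cite{Bat}---and your argument (slicing $n\,\textnormal{Pyr}(\Pc)$ by the last coordinate, identifying the height-$k$ slice with $(n-k)\Pc$, deducing $L_{\textnormal{Pyr}(\Pc)}(n)=\sum_{j=0}^{n}L_{\Pc}(j)$, and cancelling the extra factor of $(1-t)$ against the $1/(1-t)$ coming from partial summation) is precisely the standard proof of the cited result, with the one delicate step (the slice identification, including the apex slice $k=n$ contributing $L_{\Pc}(0)=1$) correctly identified and handled.
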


\begin{Lemma}[{\cite[Theorem 1.3]{HKN}}]
	\label{span}
	Let
	$\Pc \subset \RR^d$ be a lattice polytope of dimension $d$ whose $\delta$-polynomial equals $\delta_0+\delta_1t+\cdots +\delta_st^s$, where $\delta_s \neq 0$.
	If $\Pc$ is spanning,
	then one has $\delta_i \geq 1$ for any $0 \leq i \leq s$.
\end{Lemma}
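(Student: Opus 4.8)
The plan is to translate the statement into commutative algebra, realise $\delta(\Pc,t)$ as the Hilbert series of an Artinian reduction of the Ehrhart ring, and then control that reduction by the \emph{uniform position principle} of Harris; the spanning hypothesis is precisely what makes the principle applicable. Throughout I would take the ground field to be $\CC$, which is harmless because the $\delta$-vector is a combinatorial invariant independent of the field. Concretely, let $C=\cone(\Pc\times\{1\})\subset\RR^{d+1}$ and let $E=\CC[C\cap\ZZ^{d+1}]$ be the Ehrhart ring, graded by the last coordinate. By Hochster's theorem $E$ is a normal Cohen--Macaulay domain of Krull dimension $d+1$, with Hilbert series $\sum_{n\ge 0}L_{\Pc}(n)t^n=\delta(\Pc,t)/(1-t)^{d+1}$. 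Its degree-one piece $E_1$ is spanned by the lattice points of $\Pc$ lifted to height $1$, and $E$ is module-finite over the subalgebra $\CC[\Pc]$ generated by $E_1$, since both semigroup rings determine the same cone $C$. Hence the invertible sheaf $\OO_Y(1)$ on $Y:=\operatorname{Proj}E$ is base-point free, so a generic choice of $\theta_1,\dots,\theta_{d+1}\in E_1$ has empty common zero locus and forms a homogeneous system of parameters; as $E$ is Cohen--Macaulay this is a regular sequence, and therefore $\dim_{\CC}\bigl(E/(\theta_1,\dots,\theta_{d+1})\bigr)_i=\delta_i$ for all $i$. This reduces the claim to showing that the Hilbert function of this Artinian reduction has no internal zeros below its top degree $s$.

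Next I would examine the morphism $\phi\colon Y\to\PP^M$ (with $M=|\Pc\cap\ZZ^d|-1$) attached to $E_1$, whose image is the projective toric variety $X=\operatorname{Proj}\CC[\Pc]$, an integral, non-degenerate variety of degree equal to the normalized volume of $\Pc$. This is the single point at which the hypothesis enters: $\Pc$ is spanning exactly when the lattice points $\{(v,1):v\in\Pc\cap\ZZ^d\}$ generate $\ZZ^{d+1}$, which is equivalent to $\mathrm{Frac}(\CC[\Pc])=\mathrm{Frac}(E)$, i.e. to $\phi$ being \emph{birational} onto $X$. Cutting $Y$ successively by the general hyperplanes $\{\theta_1=0\},\dots,\{\theta_{d-1}=0\}$ yields, by Bertini's irreducibility theorem over $\CC$ (the image dimension stays at least $2$ until the last cut since $\phi$ is finite), an integral curve $\Cc$; one further general section $\{\theta_d=0\}$ produces a zero-dimensional reduced scheme $\Gamma$ of length equal to the normalized volume. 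Because $\phi$ is birational, $\Gamma$ maps isomorphically to a general hyperplane section of the integral curve $\phi(\Cc)\subset\PP^M$, so Harris's uniform position principle applies and the points of $\Gamma$ lie in uniform position.

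Finally I would invoke the structural consequence of uniform position: the $h$-vector of a reduced set of points in uniform position is of decreasing type, and in particular has no internal zeros between degree $0$ and its last nonzero entry. Since the Cohen--Macaulayness of $E$ identifies this $h$-vector with $(\delta_0,\dots,\delta_s)$, we obtain $\delta_i\ge 1$ for all $0\le i\le s$, as required. One should note that the conclusion genuinely fails without spanning: when $\phi$ has degree $e>1$ the section $\Gamma$ consists of $e$-element clusters over the points of $\phi(\Gamma)$, the monodromy is no longer the full symmetric group, uniform position breaks down, and gaps in the $\delta$-vector (such as $1+t^2$) can occur.

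The main obstacle, and the step deserving the most care, is the bookkeeping that identifies $(\delta_0,\dots,\delta_s)$ with the $h$-vector of the configuration $\Gamma$: the ring $E$ is not standard graded unless $\Pc$ is integrally closed, so one cannot simply quote the theory of points in a projective embedding. One must verify that reducing the Cohen--Macaulay ring $E$ modulo the regular sequence $\theta_1,\dots,\theta_{d-1}$ produces the saturated homogeneous coordinate ring of the integral curve $\Cc$ (so that the first difference of its Hilbert function is genuinely $\delta$), and that the resulting general section stays reduced and in uniform position. This is also where the characteristic-zero assumption is used, through both Bertini and the uniform position principle, and arranging all of these simultaneously for degree-one sections coming from $E_1$ is the technical crux of the proof.
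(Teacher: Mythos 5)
The paper offers no proof of this lemma at all: it is imported verbatim as \cite[Theorem 1.3]{HKN}, so your attempt can only be judged against what a correct proof must actually accomplish. Your setup is sound as far as it goes: $E$ is Cohen--Macaulay by Hochster, generic $\theta_1,\dots,\theta_{d+1}\in E_1$ form a regular sequence because $\CC[\Pc]\subseteq E$ is module-finite, the Artinian reduction has Hilbert function $(\delta_0,\dots,\delta_s)$, and spanning is correctly translated into birationality of $\phi$. The genuine gap is your final step, and you half-notice it yourself before waving it off as ``bookkeeping.'' The uniform position principle controls the Hilbert function of $\phi(\Gamma)$ with respect to \emph{linear} forms, i.e.\ the numerics of the standard graded subring $\CC[\Pc]$ and its reductions --- not the numerics of $E$. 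When $\Pc$ is not integrally closed these genuinely differ: $\delta(\Pc,t)$ is recorded by the non-standard graded ring $R=E/(\theta_1,\dots,\theta_d)$, a finite birational extension of the coordinate ring of $\Gamma$ with algebra generators in degrees $\geq 2$, and the statement ``points in uniform position have a gap-free $h$-vector'' bounds only the degree-one-generated part. So your concluding sentence proves gap-freeness for the $h$-vector of the degree-one subalgebra (where the result is classical and spanning is irrelevant), not for $\delta(\Pc,t)$; and the proposed repair --- that $E/(\theta_1,\dots,\theta_{d-1})$ be the saturated homogeneous coordinate ring of the curve --- is false as stated, since that ring is not standard graded and is not the coordinate ring of any projectively embedded curve. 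Note also that the heavy machinery is aimed at the wrong target: once $\Gamma$ is a reduced point set, its Hilbert function strictly increases until it reaches $\deg\Gamma$ for elementary reasons, so Harris's monodromy argument and ``decreasing type'' buy nothing that is needed here.

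What actually closes the argument is more elementary and sits exactly in the spot you left open. Use genericity and birationality to arrange that $\Gamma$ consists of $V=\mathrm{Vol}$ distinct reduced points whose images under the degree-one forms are pairwise distinct, and fix $\theta\in R_1$ nonvanishing on $\Gamma$. Evaluation $f\mapsto f/\theta^n$ identifies $R_n$ (injectively, since $R$ has positive depth and $\Gamma$ is reduced) with a subspace $V_n$ of the $V$-dimensional algebra of functions on $\Gamma$, and one checks $V_0=\langle 1\rangle$, $V_n\subseteq V_{n+1}$, $V_1V_n\subseteq V_{n+1}$, and $V_n=k^V$ for $n\gg 0$. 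If $\dim R_n=\dim R_{n+1}$ for some $n$, then $V_n=V_{n+1}$ is a module over the subalgebra generated by $V_1$; since $V_1$ contains the constants and separates the points of $\Gamma$, that subalgebra is all of $k^V$, whence $V_n\supseteq k^V\cdot 1=k^V$. Thus $\dim R_n=\delta_0+\cdots+\delta_n$ is strictly increasing until it reaches the normalized volume, which is exactly $\delta_i\geq 1$ for $0\leq i\leq s$. Your draft has the two correct pillars --- Cohen--Macaulayness and the spanning-equals-birationality translation --- but lacks this separation-of-points mechanism, and without it (or some substitute engaging the higher-degree generators of $E$) the proof does not go through.
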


By combining Theorem \ref{span5} and Lemma \ref{span}, we can obtain the following corollary.
\begin{Corollary}
	\label{deltaprime}
Let $p$ be a prime integer and $\Pc \subset \RR^d$ a  lattice polytope of dimension $d$ whose normalized volume equals $p$ and whose $\delta$-polynomial equals $\delta_0+\delta_1t+\cdots+\delta_st^s$, where $\delta_s \neq 0$.	
Suppose that $\Pc$ is not an empty simplex.
Then one has $\delta_i \geq 1$ for any $0 \leq i \leq s$.
\end{Corollary}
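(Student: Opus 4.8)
The plan is to obtain this corollary by directly chaining the two results stated immediately above it, Theorem \ref{span5} and Lemma \ref{span}, with no additional argument of substance. First I would check that $\Pc$ meets the hypotheses of Theorem \ref{span5}: its normalized volume is the prime $p$, and by assumption $\Pc$ is not an empty simplex. Applying that theorem at once yields that $\Pc$ is spanning.

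Next I would feed this into Lemma \ref{span}. Since $\Pc$ is spanning and its $\delta$-polynomial is $\delta_0+\delta_1t+\cdots+\delta_st^s$ with $\delta_s \neq 0$, the lemma gives $\delta_i \geq 1$ for all $0 \leq i \leq s$, which is exactly the claimed inequality. Thus the proof is simply the composition of the two implications ``prime volume and not empty simplex $\implies$ spanning'' and ``spanning $\implies$ all $\delta_i \geq 1$''.

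There is essentially no obstacle to overcome here, since all the genuine content has already been discharged: the primality of the normalized volume does its work inside Theorem \ref{span5}, where it forces the index of any lattice triangulation of $\Pc$ to be $1$, and the positivity $\delta_i \geq 1$ for spanning polytopes is the quoted statement of Lemma \ref{span} from \cite{HKN}. The only thing I would verify is that the hypotheses of the corollary match those of the two ingredients verbatim, which they do, so the corollary follows immediately.
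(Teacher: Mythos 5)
Your proof is correct and is exactly the paper's argument: the paper introduces this corollary with the remark ``By combining Theorem \ref{span5} and Lemma \ref{span}, we can obtain the following corollary,'' which is precisely your chaining of ``prime volume and not an empty simplex $\implies$ spanning'' with ``spanning $\implies \delta_i \geq 1$ for $0 \leq i \leq s$.'' Nothing more is needed, and you correctly identified that the hypotheses match verbatim.
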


Next, we give indispensable examples for our proof of Theorem \ref{main}.

\begin{Example}\label{ex}
	{\em
(a) Let $\Pc_1 \subset \RR^2$ be the lattice polytope which is the convex hull of the following lattice points:
$${\bf 0}, \eb_1, \eb_2, 2\eb_1+3\eb_2 \in \RR^2.$$
Then one has $\delta(\Pc_1,t)=1+3t+t^2$.

(b) Let $\Pc_2 \subset \RR^3$ be the lattice polytope which is the convex hull of the following lattice points:
$${\bf 0},\eb_1,\eb_2,\eb_3,  \eb_1+\eb_2+3\eb_3 \in \RR^3.$$
Then one has $\delta(\Pc_2,t)=1+t+3t^2$.

(c) Let $\Pc_3 \subset \RR^5$ be the lattice polytope which is the convex hull of the following lattice points:
$${\bf 0},\eb_1,\eb_2,\eb_3,\eb_4,\eb_5,-\eb_1+\eb_2+\eb_3+\eb_4+2\eb_5 \in \RR^5.$$
Then one has $\delta(\Pc_3,t)=1+t+t^2+2t^3$.
}
\end{Example} 

Finally, we prove Theorem \ref{main}.
\begin{proof}[Proof of Theorem \ref{main}]
First, we can prove the "If"  part of Theorem \ref{main} from Theorem \ref{Hig5}, Lemma \ref{pyr} and Example \ref{ex}.
Hence we should prove the "Only if " part of Theorem \ref{main}.
Let $\Pc \subset \RR^d$ be a lattice non-simplex of dimension $d$ whose normalized volume equals $5$
and $\delta(\Pc,t)=\delta_0+\delta_1t+\cdots+\delta_dt^d$ the $\delta$-polynomial of $\Pc$.
By Corollary \ref{deltaprime}  and the inequalities (\ref{eq1}) and (\ref{eq2}), and the fact $\delta_1 \geq \delta_d$, one of the followings is satisfied:
\begin{enumerate}
	\item $\delta(\Pc,t)=1+4t$ and $d \geq 1$;
	\item $\delta(\Pc,t)=1+3t+t^2$ and $d \geq 2$;
	\item $\delta(\Pc,t)=1+2t+2t^2$ and $d \geq 2$;
	\item $\delta(\Pc,t)=1+t+3t^2$ and $d \geq 3$;
	\item $\delta(\Pc,t)=1+t+2t^2+t^3$ and $d \geq 3$;
	\item $\delta(\Pc,t)=1+t+t^2+2t^3$ and $d \geq 5$;
	\item $\delta(\Pc,t)=1+t+t^2+t^3+t^4$ and $d \geq 4$.
\end{enumerate}
Then we know that the conditions $(1),(3),(5)$ and $(7)$ satisfy the condition of Theorem \ref{Hig5}.
This completes the proof.
\end{proof}

	

\end{document}